\newtheorem{thm}{Theorem}
\newtheorem{lem}[thm]{Lemma}
\newtheorem{clm}[thm]{Claim}
\theoremstyle{definition}
\newtheorem{dfn}[thm]{Definition}
\newtheorem{rem}[thm]{Remark}
\newtheorem{prob}[thm]{Problem}
\numberwithin{thm}{section}
\numberwithin{equation}{section}
\title[Collapsing to Alexandrov spaces]{Collapsing to Alexandrov spaces with isolated mild singularities}
\author[T. Fujioka]{Tadashi Fujioka}
\address{Department of Mathematics, Kyoto University, Kitashirakawa Oiwakecho, Sakyo-ku, Kyoto 606-8502, Japan}
\email{\href{mailto:tfujioka@math.kyoto-u.ac.jp}{tfujioka@math.kyoto-u.ac.jp}, \href{mailto:tfujioka210@gmail.com}{tfujioka210@gmail.com}}
\date{\today}
\subjclass[2010]{53C20, 53C21, 53C23}
\keywords{Lower curvature bound, Alexandrov spaces, collapse, locally trivial fibration}
\thanks{Supported by JSPS KAKENHI Grant Number 15H05739}
\begin{document}

\begin{abstract}
Let $M_j$ be a sequence of Riemannian manifolds with sectional curvature bound below collapsing to a compact Alexandrov space $X$ of dimension $k$.
Suppose that all but finitely many points of $X$ are $(k,\delta)$-strained and that the space of directions at each exceptional point contains $k+1$ directions making obtuse angles with each other.
We prove that $M_j$ admits a structure of locally trivial fibration over $X$ for sufficiently large $j$.
The same is true for collapsing sequences of Alexandrov spaces such that the infimum of the volume of the spaces of directions is sufficiently large relative to $\delta$.
\end{abstract}

\maketitle

\section{Introduction}\label{sec:int}
The goal of the collapsing theory of Riemannian manifolds is to describe the topology of collapsing manifolds with a lower sectional curvature bound in terms of the geometry of limit Alexandrov spaces.
In general, singular fibers arise over singular strata of a limit space.
However, if a limit space satisfies some regularity condition, one can expect that collapsing manifolds admit fibration structures over the limit space.

Suppose a sequence $M_j$ of $n$-dimensional Riemannian manifolds with sectional curvature $\ge\kappa$ converges to a compact Alexandrov space $X$ of dimension $k<n$ in the Gromov-Hausdorff topology.
Yamaguchi's fibration theorem \cite{Y:col} asserts that if $X$ is a Riemannian manifold,  or more generally every point of $X$ is $(k,\delta)$-strained, then $M_j$ admits a structure of locally trivial fibration over $X$ for sufficiently large $j$ (cf.\ \cite{Y:conv}, \cite{RX}, \cite{X}, \cite{XY}, \cite{F:fibr}).
On the other hand, Perelman \cite{Per:col} showed that if $X$ has no singular strata called extremal subsets, then $M_j$ admits a kind of Serre fibration structure over $X$ (cf.\ \cite{F:good}).
While Perelman's conclusion is weaker than Yamaguchi's one, the assumption is optimal.
There is a large gap between these two theorems.
For example, in case $X$ is a polyhedral surface, Yamaguchi's assumption means that the sum of angles at each vertex (which is not greater than $2\pi$) is almost $2\pi$, whereas Perelman's assumption means that it is greater than $\pi$.

In this paper we extend Yamaguchi's fibration theorem to the case where $X$ contains finitely many exceptional points which are less regular than $(k,\delta)$-strained points.
We say that a point $p\in X$ is \textit{weakly $k$-strained} if the space of directions $\Sigma_p$ contains $k+1$ directions making obtuse angles with each other.
This type of regularity was introduced and studied by Perelman \cite{Per:alex}, \cite{Per:mor}, \cite{Per:dc}.
In particular there is a distance coordinate around a weakly $k$-strained point.

The following is the main result of this paper.
We will omit the lower curvature bound $\kappa$ unless otherwise stated.
A positive number $\delta$ is less than some constant depending only on $n$ and $\kappa$.

\begin{thm}\label{thm:main}
Let $X$ be a $k$-dimensional compact Alexandrov space such that all points are $(k,\delta)$-strained except for finitely many weakly $k$-strained points, where $k<n$.
Let $M$ be an $n$-dimensional Riemannian manifold sufficiently close to $X$ in the Gromov-Hausdorff distance.
Then $M$ admits a structure of locally trivial fibration over $X$.
\end{thm}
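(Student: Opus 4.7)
The plan is to prove Theorem \ref{thm:main} by decomposing $X$ into its regular part, where Yamaguchi's theorem applies directly, and small neighborhoods of the finitely many exceptional weakly $k$-strained points, constructing a local fibration on each piece, and gluing them via standard techniques from collapsing theory.

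Let $\{p_1,\dots,p_N\}$ be the exceptional points. Choose $r>0$ so small that the balls $B(p_i,r)$ are pairwise disjoint and every point outside $\bigcup_i B(p_i,r/2)$ is $(k,\delta)$-strained. On the compact regular part $X_0 = X\setminus\bigcup_i B(p_i,r/2)$, Yamaguchi's fibration theorem \cite{Y:col} (with its bounded-domain refinement from \cite{Y:conv}, \cite{F:fibr}) produces a locally trivial fibration $\pi_0\colon V_0\to X_0$, where $V_0\subset M$ is the preimage of a neighborhood of $X_0$ under a fixed Gromov--Hausdorff approximation, built from a $(k,\delta)$-strainer compatible admissible map.

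Near each exceptional point $p_i$, I would exploit Perelman's theory of distance coordinates at weakly strained points \cite{Per:alex}, \cite{Per:mor}, \cite{Per:dc}. By assumption there exist $k+1$ points $a_0,\dots,a_k$ with the directions $\uparrow_{p_i}^{a_j}$ pairwise making obtuse angles. Perelman showed that the map $f_i=(d(a_1,\cdot),\dots,d(a_k,\cdot))$ is a bi-Lipschitz open embedding of a neighborhood $U_i$ of $p_i$ into $\mathbb{R}^k$, with the remaining point $a_0$ serving as an auxiliary noncritical pushing direction via the first variation formula. Lifting the base points to $\tilde a_j\in M$ close under the GH approximation, I would set $\tilde f_i=(d(\tilde a_1,\cdot),\dots,d(\tilde a_k,\cdot))$ on the preimage $\tilde U_i \subset M$, verify that the obtuse angle estimates at $p_i$ pass to the collapsed manifold (with a loss depending only on the collapsing parameter), and apply Yamaguchi's fibration theorem for noncritical admissible maps to conclude that $\tilde f_i$ is a locally trivial fiber bundle with fiber of dimension $n-k$.

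Finally, on the annular overlaps $V_0\cap \tilde U_i$ I would glue $\pi_0$ with the local fibrations $\tilde f_i$. By arranging that the admissible map used for $\pi_0$ is built, in a neighborhood of $\partial U_i$, from the same lifted distance functions $d(\tilde a_j,\cdot)$, the two fibrations are $C^0$-close, and a partition of unity combined with center-of-mass averaging along the fibers of a common regular comparison map yields a globally defined locally trivial fibration $\pi\colon M\to X$. The main obstacle is the local step near $p_i$: the obtuse angle assumption is substantially weaker than the $(k,\delta)$-strained one, so noncriticality of $\tilde f_i$ in the collapsing regime does not follow from the usual strainer calculus. Combining the stability of weakly strained structures under GH convergence with a quantitative variation argument using the $(k+1)$-st direction, and ensuring uniform regularity as one approaches the exceptional fiber $\tilde f_i^{-1}(\tilde f_i(\tilde p_i))$, is the technical heart of the proof.
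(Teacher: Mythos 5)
Your overall architecture (regular part via the strainer machinery, a Perelman-type distance coordinate near each exceptional point, gluing on an annulus) matches the paper's, but the step you dismiss as ``standard techniques'' is precisely where the proof lives, and your plan for it would fail. First, $C^0$-closeness of the two local fibrations plus a partition of unity is not enough: to conclude that the glued map is still a locally trivial fibration one must verify that it remains noncritical in Perelman's sense, and noncriticality is a first-order condition. What has to be shown is that the gluing changes \emph{differential quotients} only by $\varkappa(\delta)$, i.e.\ an estimate of the form $\bigl|(\varphi\circ g(x)-\varphi\circ g(y))-(\hat\varphi(x)-\hat\varphi(y))\bigr|<\varkappa(\delta)|xy|$ on the annulus; this is the content of Claim \ref{clm:key} and rests on the quantitative property ($\ast$) of Theorem \ref{thm:str}, not on Yamaguchi's theorem as a black box. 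Second, your suggestion to ``arrange that the admissible map used for $\pi_0$ is built, near $\partial U_i$, from the same lifted distance functions $d(\tilde a_j,\cdot)$'' cannot be carried out: a point $z$ in the annulus may lie in the cut locus of $a_j$, in which case $\uparrow_z^{a_j}$ has no almost-antipode in $\Sigma_z$, $(a_j,\cdot)$ extends to no $(1,\varkappa(\delta))$-strainer at $z$, and the single distance function $|a_j\cdot|$ simply does not satisfy the compatibility estimate with the strainer-built map $f$ there.

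The missing idea is the averaging device: replace each $a_i$ by a maximal $\omega$-discrete net $\{a_{i\alpha}\}$ in a small neighborhood of $a_i$ and use the admissible map $\varphi_i=\frac1{N_i}\sum_\alpha|a_{i\alpha}\cdot|$ as the local coordinate. Since the cut locus of any $z$ in the annulus has measure zero, all but a fraction $\varkappa(\nu)$ of the $a_{i\alpha}$ (counted via Bishop--Gromov, inequalities \eqref{eq:vol1} and \eqref{eq:vol2}) admit almost-antipodes at $z$ and hence satisfy the differential-quotient estimate of ($\ast$); the bad ones contribute only $C|xy|$ times a small fraction, so the average still satisfies the $\varkappa(\delta)$ bound. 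Separately, your worry about the local step near $p_i$ is misplaced in the opposite direction: that part is already supplied by Perelman's results on admissible maps (\cite{Per:alex}, \cite{Per:mor}), although one does need to extend the generalized fibration theorem of \cite{F:fibr} so that its noncriticality notion covers averaged admissible maps (this is the DER-function modification carried out in Sec.\ \ref{sec:rev}). As written, your proposal identifies the wrong step as the technical heart and leaves the actual gluing estimate unproved.
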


In case $X$ is a polyhedral surface, our assumption means that the sum of angles at each vertex is greater than $3\pi/2$.
More generally, if $X$ is a $2$-dimensional Alexandrov space without boundary, all but finitely many points are $(2,\delta)$-strained (\cite{BGP}).
Furthermore $X$ is a topological manifold and any space of directions is homeomorphic to a circle (\cite{Per:alex}).
In this case our assumption is equivalent to the condition that every space of directions has length $>3\pi/2$.
Note that the absence of boundary is necessary to obtain a fibration structure since it is an extremal subset (\cite{PP:ext}).

In a previous paper \cite{F:fibr}, the author generalized Yamaguchi's fibration theorem to collapsing sequences of Alexandrov spaces with a lower bound on the volume of the spaces of directions.
The same generalization works in this case as well.

\begin{thm}\label{thm:main'}
Theorem \ref{thm:main} holds true if $M$ is an Alexandrov space such that the infimum of the volume of the spaces of directions is greater than some constant depending only on $n$, $\kappa$, and $\delta$.
\end{thm}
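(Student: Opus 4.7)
The plan is to transplant the proof of Theorem \ref{thm:main} to the Alexandrov setting, with the uniform lower bound on the volume of the spaces of directions of $M$ playing the role of the smooth structure of a Riemannian manifold, as in \cite{F:fibr}. Since the statement and all of the ingredients of Theorem \ref{thm:main} are formulated in purely Alexandrov-geometric terms (strained and weakly strained points, distance coordinates, admissible maps, gradient curves), only the quantitative regularity statements about the maps $M\to X$ need to be strengthened from their Riemannian form.

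More concretely, I would first fix a covering of $X$ by finitely many charts of two kinds: distance-coordinate charts at ordinary $(k,\delta)$-strained points, and small balls around each of the finitely many exceptional weakly $k$-strained points. On each chart, the proof of Theorem \ref{thm:main} builds an admissible map from $M$ to the chart out of distance functions to a suitable tuple of reference points in $M$ and shows that this map is a locally trivial fibration; the local fibrations are then glued into a global one by a center-of-mass construction. On the $(k,\delta)$-strained charts the generalized fibration theorem of \cite{F:fibr} applies directly under the volume-bound hypothesis and produces the desired fibration. At an exceptional point of $X$, the $k+1$ obtuse directions from \cite{Per:alex}, \cite{Per:mor}, \cite{Per:dc} still define a bi-Lipschitz distance coordinate on $X$, and I would re-run the corresponding local argument from the proof of Theorem \ref{thm:main}, replacing each appeal to Riemannian regularity by the corresponding Alexandrov-space lemma from \cite{F:fibr}.

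The main obstacle is to verify, at the exceptional points, that the lifted admissible map is regular in the Perelman sense under only a volume lower bound on $\Sigma_p M$. In the Riemannian setting one has automatic strainedness of $M$ at the relevant scale, so the gradients of the chosen distance functions are automatically almost orthonormal; in the Alexandrov setting one must instead produce, for every $p\in M$ close to the exceptional point, enough almost orthogonal directions in $\Sigma_p M$ to show that these gradients are almost orthonormal and that the fibers have the expected dimension. This is precisely what the gradient-exchange and fiber-regularity arguments of \cite{F:fibr} give, using the volume bound to find the required direction in $\Sigma_p M$. Once regularity has been established at each exceptional point in this way, the gluing and the remaining steps are metric and require no further modification, yielding Theorem \ref{thm:main'}.
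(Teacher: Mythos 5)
Your proposal takes essentially the same route as the paper: Theorems \ref{thm:main} and \ref{thm:main'} are proved there by a single argument already formulated for Alexandrov $M$, with the volume bound on the spaces of directions entering only through the generalized fibration theorem of \cite{F:fibr} (suitably modified in Sec.\ \ref{sec:rev} to cover averaged distance maps), exactly as you indicate. The obstacle you single out---regularity of the lifted admissible map near the exceptional points under only the volume hypothesis---is what Claim \ref{clm:key} together with that modified fibration theorem supplies.
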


The proof of the main theorems is an application of the method developed in \cite{F:fibr}, where the author gave an alternative proof of Yamaguchi's fibration theorem.
Around a $(k,\delta)$-strained point, one can define a local fibration by lifting the distance map from a strainer.
The key observation in \cite{F:fibr} is that any two local fibrations defined by different strainers have almost the same differential quotients.
Hence if one glue them by taking an average of them, the differential quotient remains almost unchanged.
Since the resulting map has almost the same regularity as a strainer map, it becomes a locally trivial fibration.

In this paper the limit space is allowed to have finitely many exceptional points which are weakly $k$-strained.
As in the case of strained points, one can define a local fibration around a weakly strained point.
We show that this local fibration can be glued to the above strainer-like map, with little change in the differential quotient.
In particular the resulting map has almost the same regularity as the original map and is a locally trivial fibration.

The following are natural problems in this direction.

\begin{prob}
Extend Yamaguchi's fibration theorem to the case where every point of $X$ is weakly $k$-strained.
\end{prob}

\begin{prob}
Extend Yamaguchi's fibration theorem to the case where $\dim X=2$ and $X$ has no proper extremal subsets; in other words every space of direction is a circle of length $>\pi$.
\end{prob}

This paper is organized as follows.
Sec.\ \ref{sec:nac} contains notation and conventions.
Sec.\ \ref{sec:pre} provides preliminaries on strained points and weakly strained points.
Sec.\ \ref{sec:prf} is devoted to a direct proof of Theorem \ref{thm:main}, which is much simpler than the proof of Theorem \ref{thm:main'}.
The generalization to Theorem \ref{thm:main'} needs a modified version of the generalized fibration theorem proved in \cite{F:fibr}, which will be discussed in Sec.\ \ref{sec:rev}.
\section{Notation and conventions}\label{sec:nac}

Positive integers $n$ and $k$ denote the dimension of a collapsing space and a limit space, respectively.
The lower curvature bound $\kappa$ will be omitted.
A lower bound $\ell$ for the lengths of strainers is uniformly bounded above when $\kappa<0$.

We denote by $c$ and $C$ various small and large positive constants, respectively, which depend only on $n$ and $\kappa$ unless otherwise stated.
The dependence on additional parameters will be indicated explicitly, like $c(\varepsilon)$.
A positive number $\delta$ is assumed to be less than some constant depending only on $n$ and $\kappa$.
We denote by $\varkappa(\delta)$ various positive functions such that $\varkappa(\delta)\to0$ as $\delta\to0$ which depends only on $n$ and $\kappa$.
If there exists another positive number $\varepsilon$, the choices of $\delta$ and $\varkappa(\delta)$ may depend additionally on $\varepsilon$ (especially in Sec.\ \ref{sec:rev}).

Let $M$ be a collapsing space and $X$ a limit space.
We fix a Gromov-Hausdorff approximation between $M$ and $X$.
For $p\in X$ we denote by $\hat p\in M$ the image of $p$ under this approximation and call it a lift of $p$.
In particular one can lift distance functions.
If a map $\varphi:X\to\mathbb R^l$ is built up out of distance functions, we denote by $\hat\varphi:M\to\mathbb R^l$ its natural lift.

\section{Preliminaries}\label{sec:pre}

We refer to \cite{BGP} and \cite{BBI} for the basics of Alexandrov spaces.
See also \cite[\S3]{F:fibr} for the basic properties of strained points.

Let $X$ be an Alexandrov space.
We first define strained points introduced by Burago-Gromov-Perelman \cite{BGP}.
We denote by $\tilde\angle$ the comparison angle.

\begin{dfn}\label{dfn:str}
A point $p\in X$ is said to be \textit{$(k,\delta)$-strained} if there exists $\{(a_i,b_i)\}_{i=1}^k$ in $X$ such that
\[\tilde\angle a_ipb_i>\pi-\delta,\quad\tilde\angle a_ipa_j,\tilde\angle a_ipb_j,\tilde\angle b_ipb_j>\pi/2-\delta\]
for any $i\neq j$.
The collection $\{(a_i,b_i)\}_{i=1}^k$ is called a \textit{$(k,\delta)$-strainer} at $p$.
The value $\min_i\{|a_ip|,|b_ip|\}$ is called the \textit{length} of this strainer.
\end{dfn}

Let $\{(a_i,b_i)\}_{i=1}^k$ be a $(k,\delta)$-strainer at $p\in X$, where $k=\dim X$.
Then the distance map $\varphi=(|a_1\cdot|,\dots,|a_k\cdot|):X\to\mathbb R^k$ is a $\varkappa(\delta)$-almost isometric open embedding near $p$ (\cite{BGP}).
Let $\hat\varphi$ denote a natural lift of $\varphi$ to a collapsing space $M$.
One can define $\varphi^{-1}\circ\hat\varphi$ on a neighborhood of a lift of $p$.
Moreover it is a locally trivial fibration over a neighborhood of $p$ (\cite{Per:alex}).

Next we define a weakly strained point.
This type of regularity was introduced by Perelman \cite{Per:alex}, \cite{Per:mor}, \cite{Per:dc}.

\begin{dfn}
A point $p\in X$ is said to be \textit{weakly $k$-strained} if there exists $\{a_i\}_{i=1}^{k+1}$ in $X$ such that
\[\tilde\angle a_ipa_j>\pi/2\]
for any $i\neq j$.
The collection $\{a_i\}_{i=1}^{k+1}$ is called a \textit{weak $k$-strainer} at $p$.
\end{dfn}

Let $\{a_i\}_{i=1}^{k+1}$ be a weak $k$-strainer at $p\in X$, where $k=\dim X$.
Then the distance map $\varphi=(|a_1\cdot|,\dots,|a_k\cdot|):X\to\mathbb R^k$ is a bi-Lipschitz open embedding near $p$ (\cite{Per:alex}, \cite{Per:mor}, \cite{Per:dc}).
Let $\hat\varphi$ denote a natural lift of $\varphi$ to a collapsing space $M$.
One can define $\varphi^{-1}\circ\hat\varphi$ on a neighborhood of a lift of $p$.
Moreover it is a locally trivial fibration over a neighborhood of $p$ (\cite{Per:alex}, \cite{Per:mor}).

If $M$ is a Riemannian manifold, the proof of the local triviality of the above maps is much easier, as we will see in the next section.

\section{Proof of the main theorem}\label{sec:prf}

In this section we prove Theorem \ref{thm:main}.
We first construct a map from $M$ to $X$ and show the key property of this map (Claim \ref{clm:key}).
These construction and property actually hold for a general Alexandrov space $M$, which is not necessarily a Riemannian manifold.
We then suppose that $M$ is a Riemannian manifold and prove Theorem \ref{thm:main}, using the key property.
This key property also allows us to prove Theorem \ref{thm:main'}, but the detail is deferred to the next section since it is much more complicated.

Let us first recall the key theorem \cite[4.1]{F:fibr}.
We reformulate it as follows to be suitable for our setting.
The hat symbol $\hat\ $ will indicate a natural lift (see Sec.\ \ref{sec:nac}).

\begin{thm}\label{thm:str}
Let $X$ be an Alexandrov space of dimension $k<n$.
Let $A\subset X$ be a domain such that every point has a $(k,\delta)$-strainer with length $>\ell$.
Suppose an $n$-dimensional Alexandrov space $M$ is sufficiently close to $X$.
Then there exist a neighborhood $U\subset M$ of a lift of $A$ and a map $f:U\to X$ close to the Gromov-Hausdorff approximation satisfying the following property:
\begin{itemize}
\item[($\ast$)]
Let $(a,b)$ be a $(1,\delta)$-strainer at $p\in A$ with $|ap|=|bp|=\ell\delta$.
Then we have
\[\bigl|(|af(x)|-|af(y)|)-(|\hat ax|-|\hat ay|)\bigr|<\varkappa(\delta)|xy|\]
for any $x,y\in B(\hat p,\ell\delta^2)\cap U$.
\end{itemize}
\end{thm}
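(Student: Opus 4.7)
The plan is to reproduce the construction of $f$ from \cite[Theorem 4.1]{F:fibr} as an averaged strainer-coordinate map and then verify that property $(\ast)$ holds in this reformulation. For each $p \in A$, fix a $(k,\delta)$-strainer $\{(a_i,b_i)\}_{i=1}^k$ at $p$ of length $>\ell$. The associated distance map $\varphi_p = (|a_1 \cdot|, \dots, |a_k \cdot|) : X \to \mathbb{R}^k$ is a $\varkappa(\delta)$-almost isometric open embedding on some ball $B(p, c\ell)$. Its natural lift $\hat\varphi_p : M \to \mathbb{R}^k$ is obtained by lifting each distance function, and $f_p := \varphi_p^{-1} \circ \hat\varphi_p$ defines a local map from a neighborhood $U_p$ of $\hat p$ in $M$ into $X$, close to the fixed Gromov--Hausdorff approximation. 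The technical core of \cite{F:fibr} is that each such $f_p$ already satisfies the local analogue of $(\ast)$: given a $(1,\delta)$-strainer $(a,b)$ at a point near $p$ with $|ap|=|bp|=\ell\delta$, one expresses $|a\cdot|$ approximately as a linear combination of the strainer coordinates $|a_i\cdot|$ via angle comparison and transfers this decomposition through $\varphi_p$ and $\hat\varphi_p$ using their almost-isometric behavior.

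Next, cover a neighborhood of a lift of $A$ by such chart domains $U_{p_\alpha}$ with a locally finite subordinate partition of unity $\{\chi_\alpha\}$ on $M$. Since $X$ carries no linear structure, the average of the $X$-valued maps $f_{p_\alpha}$ must be formed inside an almost-Euclidean chart: near a reference point $q \in A$, set
$$f := \varphi_q^{-1}\Bigl(\sum_\alpha \chi_\alpha (\varphi_q \circ f_{p_\alpha})\Bigr).$$
Because different charts $\varphi_p, \varphi_q$ differ by $\varkappa(\delta)$-almost isometries on their overlapping ranges in $\mathbb{R}^k$, this defines a single global map $f$, close to the Gromov--Hausdorff approximation and independent of the reference chart up to $\varkappa(\delta)$-error. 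Since each summand $f_{p_\alpha}$ satisfies the local analogue of $(\ast)$ and averaging is linear, the averaged map inherits $(\ast)$ with at most a further $\varkappa(\delta)$-error introduced by the non-linearity of $\varphi_q^{-1}$. Integrating along a shortest path from $x$ to $y$ inside $B(\hat p, \ell\delta^2)$ converts this differential-quotient estimate into the finite-difference bound stated in $(\ast)$.

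The delicate point is controlling how the chart-dependent averaging interacts with the test strainer $(a,b)$: small changes of chart could a priori produce first-order perturbations in $|a\cdot|\circ f$. The two-scale gap between the strainer length $\ell\delta$ and the evaluation radius $\ell\delta^2$ is precisely what allows the non-linear corrections from $\varphi_q^{-1}$ to be absorbed into $\varkappa(\delta)|xy|$; the hypotheses of the theorem are calibrated for exactly this bookkeeping. Once this is in place, $(\ast)$ follows by combining the local estimate, the linearity of the averaging on $\mathbb{R}^k$, and the Lipschitz control of $\varphi_q^{-1}$ against the nonlinear correction.
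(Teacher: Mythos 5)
Your proposal follows essentially the same route as the paper: the paper gives no proof of this theorem at all, but merely recalls it as a reformulation of \cite[4.1]{F:fibr}, whose method (local fibrations $\varphi_p^{-1}\circ\hat\varphi_p$ from strainer coordinates, the observation that different local fibrations have almost equal differential quotients, and gluing by a partition-of-unity average inside an almost-Euclidean chart) is exactly what you reconstruct. The only point I would make explicit is the cross-term $\sum_\alpha(\chi_\alpha(x)-\chi_\alpha(y))(\varphi_q\circ f_{p_\alpha})(y)$ arising when you difference the average: it is controlled because $\sum_\alpha\chi_\alpha\equiv1$ and all the $f_{p_\alpha}$ are uniformly close to the Gromov--Hausdorff approximation, the same device the paper uses in the proof of Claim \ref{clm:key}.
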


\begin{rem}\label{rem:fibr}
The property ($\ast$) together with the generalized fibration theorem \cite[5.2]{F:fibr} immediately implies that $f:f^{-1}(A)\to A$ is a locally trivial fibration (under the assumption on the volume of the spaces of directions of $M$ as in Theorem \ref{thm:main'}).
If $M$ is a Riemannian manifold, the proof is much easier as we will see later.
\end{rem}

\begin{rem}\label{rem:str}
The upper bound on the length of a strainer in the property ($\ast$) is only used for the inductive proof and is not really necessary.
Indeed let $(\alpha,\beta)$ be a $(1,\delta)$-strainer at $p\in A$ with length $>\ell\delta$.
Let $a$ and $b$ be points on shortest paths $p\alpha$ and $p\beta$ at distance $\ell\delta$ from $p$, respectively.
Clearly $(a,b)$ is a $(1,\delta)$-strainer satisfying the assumption of ($\ast$).
By the property of a strainer, we have
\begin{gather*}
\bigl|(|\alpha f(x)|-|\alpha f(y)|)-(|af(x)|-|af(y)|)\bigr|<\varkappa(\delta)|xy|,\\
\bigl|(|\hat\alpha f(x)|-|\hat\alpha f(y)|)-(|\hat ax|-|\hat ay|)\bigr|<\varkappa(\delta)|xy|
\end{gather*}
for any $x,y\in B(\hat p,\ell\delta^2)$ (\cite[3.2]{F:fibr}).
Combining these with the property ($\ast$) yields
\[\bigl|(|\alpha f(x)|-|\alpha f(y)|)-(|\hat\alpha x|-|\hat\alpha y|)\bigr|<\varkappa(\delta)|xy|.\]
\end{rem}

Now we construct a map from $M$ to $X$, where $X$ is a $k$-dimensional Alexandrov space satisfying the assumption of Theorems \ref{thm:main} and \ref{thm:main'} and $M$ is an $n$-dimensional Alexandrov space sufficiently close to $X$.
Note that neither the assumption that $M$ is a Riemannian manifold nor the assumption on the volume of the spaces of directions of $M$ will be used until the end of the proof of Claim \ref{clm:key}.

Since the following argument is local on $X$, we may assume that there is only one weakly $k$-strained point $p\in X$.
Fix small $r>0$ and let $\ell>0$ be such that every point of $X\setminus B(p,r)$ has a $(k,\delta)$-strained with length $>\ell$.
These constants will be adjusted later.
By Theorem \ref{thm:str} there exist a neighborhood $U$ of $M\setminus B(\hat p,r)$ and a map $f:U\to X$ satisfying the property $(\ast)$.
By Remark \ref{rem:fibr} the restriction of $f$ to $f^{-1}(X\setminus B(p,r))$ is a locally trivial fibration (under the above-mentioned assumptions on $M$).

We will construct a local fibration around $p$ and glue it to $f$.
Let $R>0$ be so small that the blow up $R^{-1}B(p,R)$ is sufficiently close to the unit ball in the tangent cone $T_p$.
Since $p$ is weakly $k$-strained, there exist $a_1,\dots,a_k,w\in\partial B(p,R)$ such that
\[\tilde\angle a_ipa_j>\pi/2+\varepsilon,\quad\tilde\angle a_ipw>\pi/2+\varepsilon\]
for any $i\neq j$ and some $\varepsilon>0$.
As mentioned in Sec.\ \ref{sec:pre}, the distance map from $a_i$ gives a coordinate around $p$ and its lift defines a fibration around $\hat p$.
Here we modify it as follows.
For small $\omega>0$, which will be determined later, take a maximal $\omega$-discrete set $\{a_{i\alpha}\}_{\alpha=1}^{N_i}$ from the $\varepsilon R/10$-neighborhood of each $a_i$.
Then we have
\[\tilde\angle a_{i\alpha}pa_{j\beta}>\pi/2+\varepsilon/2,\quad\tilde\angle a_{i\alpha}pw>\pi/2+\varepsilon/2\]
for any $i\neq j$, $\alpha$, and $\beta$ if $R$ is small enough.
Furthermore the Bishop-Gromov inequality implies
\begin{equation}\label{eq:vol1}
N_i\ge\frac{c(v,\varepsilon,R)}{\omega^k},
\end{equation}
where $v>0$ is a lower bound for the volume of $B(p,1)$.
Define $\varphi=(\varphi_1,\dots,\varphi_k):X\to\mathbb R^k$ by
\[\varphi_i:=\frac1{N_i}\sum_{i=1}^{N_i}\varphi_{i\alpha},\quad\varphi_{i\alpha}:=|a_{i\alpha}\cdot|.\]
A map of this form, called an admissible map, was introduced and studied in \cite{Per:mor}.
We assume that the angle conditions above hold on $B(p,10r)$ for $r\ll R$.
It was shown in \cite{Per:mor} that $\varphi$ is a $c(\varepsilon)$-bi-Lipschitz open embedding on $B(p,10r)$.
Hence if $\hat\varphi:M\to\mathbb R^k$ denotes a natural lift of $\varphi$, then $\varphi^{-1}\circ\hat\varphi$ is defined on $B(\hat p,5r)$.
Furthermore its restriction to $\hat\varphi^{-1}(\varphi(B(p,3r)))$ is a locally trivial fibration over $B(p,3r)$ (\cite{Per:mor}).
A direct proof in the Riemannian case will be given later.

We now glue $\varphi^{-1}\circ\hat\varphi$ to $f$ as follows:

\begin{equation*}
g:=
\begin{cases}
\hfil\varphi^{-1}\circ\hat\varphi & \text{on }B(\hat p,r)\\
\varphi^{-1}((1-\chi_{\hat p})\varphi\circ f+\chi_{\hat p}\hat\varphi) & \text{on }B(\hat p,2r)\setminus B(\hat p,r)\\
\hfil f & \text{otherwise}
\end{cases}
\end{equation*}
where $\chi_{\hat p}:=\chi(|\hat p\cdot|/r)$ and $\chi:[0,\infty)\to[0,1]$ is a smooth function such that $\chi\equiv 1$ on $[0,1]$ and $\chi\equiv0$ on $[2,\infty)$.

We show that this gluing procedure hardly changes the differential quotient.

\begin{clm}\label{clm:key}
For any sufficiently close $x,y$ in a neighborhood of $\bar B(\hat p,2r)\setminus B(\hat p,r)$, we have
\[\bigl|(\varphi\circ g(x)-\varphi\circ g(y))-(\hat\varphi(x)-\hat\varphi(y))\bigr|<\varkappa(\delta)|xy|.\]
\end{clm}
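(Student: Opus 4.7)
The plan is to expand $\varphi\circ g$ on the gluing annulus so that the error splits into two simple pieces, each of which can be bounded by $\varkappa(\delta)|xy|$: the first via property $(\ast)$, the second via Gromov--Hausdorff closeness.

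On $B(\hat p,2r)\setminus B(\hat p,r)$, the definition of $g$ gives $\varphi\circ g=(1-\chi_{\hat p})\,\varphi\circ f+\chi_{\hat p}\,\hat\varphi$. Setting $h:=\varphi\circ f-\hat\varphi$ and $\chi_z:=\chi_{\hat p}(z)$, an elementary rearrangement yields
\[\bigl(\varphi\circ g(x)-\varphi\circ g(y)\bigr)-\bigl(\hat\varphi(x)-\hat\varphi(y)\bigr)=(1-\chi_x)\bigl(h(x)-h(y)\bigr)+(\chi_y-\chi_x)\,h(y),\]
so it will suffice to show that $|h(x)-h(y)|<\varkappa(\delta)|xy|$ and $|(\chi_y-\chi_x)\,h(y)|<\varkappa(\delta)|xy|$.

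For the first estimate I would decompose each component as $h_i=\frac1{N_i}\sum_\alpha(|a_{i\alpha}f(\cdot)|-|\hat a_{i\alpha}\cdot|)$ and apply property $(\ast)$, in the form of Remark \ref{rem:str}, to each summand with $a=a_{i\alpha}$, then average over $\alpha$. This requires, for every $(i,\alpha)$, a point $p'\in A$ close to both $x$ and $y$ at which $a_{i\alpha}$ belongs to a $(1,\delta)$-strainer of length $>\ell\delta$. I would take $p'$ near the projection of $y$, so that the condition $x,y\in B(\hat p',\ell\delta^2)$ is automatic once $|xy|$ is small enough; since $p'\in A$ is $(k,\delta)$-strained, $\Sigma_{p'}$ is $\varkappa(\delta)$-close to $\mathbb S^{k-1}$ and hence admits a $\varkappa(\delta)$-antipode to $\uparrow_{p'}^{a_{i\alpha}}$, which I would extend inside $A$ to a partner $b$ with $|p'b|>\ell\delta$.

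For the second estimate the cutoff $\chi_{\hat p}$ is $(C/r)$-Lipschitz, so $|\chi_x-\chi_y|\le C|xy|/r$, while $\varphi\circ f(y)$ and $\hat\varphi(y)$ are both lifts of $\varphi$ evaluated at the Gromov--Hausdorff image of $y$, so $|h(y)|$ is controlled by the Gromov--Hausdorff distance between $M$ and $X$. Choosing $M$ sufficiently close to $X$ (with closeness depending on $r$, $\delta$, and the finite family $\{a_{i\alpha}\}$) one secures $|h(y)|<\varkappa(\delta)\,r$, which yields the required bound. The delicate step will be the construction of the strainer partner $b$ uniformly over $(i,\alpha)$: one must verify that the regularity of $A$ provides an antipodal point at distance $>\ell\delta$ from $p'$ for every $a_{i\alpha}$, after which the decomposition above closes the argument.
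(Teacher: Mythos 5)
Your algebraic decomposition of the error on the annulus into the two terms $(1-\chi_x)(h(x)-h(y))$ and $(\chi_y-\chi_x)\,h(y)$ is exactly the paper's, and your treatment of the second term (Lipschitz cutoff times Gromov--Hausdorff error) is correct. The gap is in the first term, and it sits precisely at the step you flag as ``delicate'': it is not merely delicate, it is false in general that every $a_{i\alpha}$ admits a strainer partner at the relevant base point. If $a_{i\alpha}$ lies on (or near) the cut locus $C_z$ of the base point $z$ --- the set of points beyond which shortest paths from $z$ cannot be extended --- there may be \emph{no} point $b$ at all with $\tilde\angle a_{i\alpha}zb$ close to $\pi$: on a flat torus with $z=0$ and $a=(1/2,0)$ every candidate $b$ gives $\tilde\angle azb\le\pi/2$, even though $z$ is perfectly regular and $\Sigma_z$ contains an exactly antipodal direction. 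Closeness of $\Sigma_{z}$ to the round sphere produces an almost-opposite \emph{direction}, but upgrading that to a \emph{point} $b$ at definite distance with the \emph{comparison} angle close to $\pi$ requires $a_{i\alpha}$ to stay a definite distance away from $C_{z}$; so applying $(\ast)$ ``to each summand'' cannot work uniformly in $\alpha$.

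This obstruction is the whole reason $\varphi_i$ was defined as an average over a maximal $\omega$-discrete net rather than as a single distance function, and the paper's proof of the claim is built around it. One fixes a finite cover of the annulus by balls $B(z_\beta,r_\beta)$; since $C_{z_\beta}$ has measure zero (Otsu--Shioya), its $2\omega_{z_\beta}$-neighborhood has measure $\le\nu$, so Bishop--Gromov bounds the number of net points inside $U(C_{z_\beta},\omega_{z_\beta})$ by $C(v)\nu/\omega^k$, a vanishing fraction of $N_i\ge c(v,\varepsilon,R)/\omega^k$. For the remaining (vast majority of) indices $\alpha$, a strainer partner of uniform length exists by compactness of $X\setminus U(C_{z_\beta},\omega_{z_\beta})$, and your argument via $(\ast)$ and Remark \ref{rem:str} applies; for the few bad indices one settles for the crude Lipschitz bound $C|xy|$; averaging the two estimates over $\alpha$ still yields $\varkappa(\delta)|xy|$ once $\nu$ is small. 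Without this measure-theoretic selection of good versus bad $\alpha$, your plan does not close.
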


\begin{proof}
Fix $q\in\bar B(p,2r)\setminus B(p,r)$.
Let $C_q$ be the cut locus of $q$, that is, the set of points beyond which shortest paths from $q$ cannot be extended.
Recall that $C_q$ is a Borel set of measure zero (\cite[3.1]{OS}).
Let $U(C_q,\cdot)$ denote metric neighborhoods of $C_q$.
For any $\nu>0$, there exists $\omega_q>0$ such that the $2\omega_q$-neighborhood $U(C_q,2\omega_q)$ has measure at most $\nu$.

Fix $1\le i\le k$.
It suffices to show the claim for each $i$-th coordinate.
Put $A_i:=\{a_{i\alpha}\}_{\alpha=1}^{N_i}$ and $A_i(q):=A_i\cap U(C_q,\omega_q)$.
Let $N_i(q)$ denote the cardinality of $A_i(q)$.
If $\omega\le\omega_q$, the Bishop-Gromov inequality implies
\begin{equation}\label{eq:vol2}
N_i(q)\le\frac{C(v)\nu}{\omega^k},
\end{equation}
where $v$ is a lower bound for the volume of $B(p,1)$ as before.
Together with the inequality \eqref{eq:vol1} this shows that $A_i\setminus A_i(q)$ occupies the vast majority of $A_i$ provided $\nu$ is small enough.

Since a shortest path does not branch in an Alexandrov space, any point of $X\setminus C_q$ can be joined to $q$ by a unique shortest path.
Since $q$ is $(k,\delta)$-strained, the space of directions at $q$ is $\varkappa(\delta)$-close to the unit $(k-1)$-sphere in the Gromov-Hausdorff distance (\cite[3.3]{F:fibr}).
In particular any direction at $q$ has an almost opposite direction.
By the compactness of $X\setminus U(C_q,\omega_q)$, there exists $\ell_q>0$ such that for any $a\in X\setminus U(C_q,\omega_q)$ there exists $b\in X$ such that $(a,b)$ is a $(1,\varkappa(\delta))$-strainer at $q$ with length $>\ell_q$.

Let $r_q<\ell_q\delta$ and take a finite cover of $\bar B(p,2r)\setminus B(p,r)$ by such $B(q_\beta,r_{q_\beta})$.
To simplify the notation we will write $r_\beta=r_{q_\beta}$, $A_{i\beta}=A_i(q_\beta)$, and so on.
We may assume $\ell\le\min_\beta\ell_\beta$ and $\omega\le\min_\beta\omega_\beta$.

Let $x,y$ be as in the assumption.
We may assume $|xy|<\ell\delta^2$ and $x\in B(\hat q_\beta,r_\beta)$ for some $\beta$.
In particular for any $a_{i\alpha}\in A_i\setminus A_{i\beta}$ there exists $b_{i\alpha}\in X$ such that $(a_{i\alpha},b_{i\alpha})$ is a $(1,\varkappa(\delta))$-strainer at $q_\beta$ with length $>\ell$.
Hence by Theorem \ref{thm:str} and Remark \ref{rem:str} we have
\[\bigl|(\varphi_{i\alpha}(f(x))-\varphi_{i\alpha}(f(y)))-(\hat\varphi_{i\alpha}(x)-\hat\varphi_{i\alpha}(y))|\bigr|<\varkappa(\delta)|xy|\]
for any $1\le\alpha\le N_i$ such that $a_{i\alpha}\in A_i\setminus A_{i\beta}$.
Setting $\psi_{i\alpha}:=(1-\chi_{\hat p})\varphi_{i\alpha}\circ f+\chi_{\hat p}\hat\varphi_{i\alpha}$, we have
\begin{align*}
&(\psi_{i\alpha}(x)-\psi_{i\alpha}(y))-(\hat\varphi_{i\alpha}(x)-\hat\varphi_{i\alpha}(y))\\
&=(1-\chi_{\hat p}(x))((\varphi_{i\alpha}\circ f(x)-\varphi_{i\alpha}\circ f(y))-(\hat\varphi_{i\alpha}(x)-\hat\varphi_{i\alpha}(y)))\\
&\qquad\qquad\qquad\qquad\qquad-(\chi_{\hat p}(x)-\chi_{\hat p}(y))(\varphi_{i\alpha}\circ f(y)-\hat\varphi_{i\alpha}(y)).
\end{align*}
Note that the second term is bounded above by an arbitrarily small multiple of $|xy|$ if $M$ is sufficiently close to $X$ since $\chi_{\hat p}$ is Lipschitz continuous and $f$ is close to the Gromov-Hausdorff approximation.
Therefore we obtain
\begin{equation}\label{eq:clm1}
\bigl|(\psi_{i\alpha}(x)-\psi_{i\alpha}(y))-(\hat\varphi_{i\alpha}(x)-\hat\varphi_{i\alpha}(y))\bigr|<\varkappa(\delta)|xy|
\end{equation}
for any $1\le \alpha\le N_i$ such that $a_{i\alpha}\in A_i\setminus A_{i\beta}$.

On the other hand, for any $1\le \alpha\le N_i$ such that $a_{i\alpha}\in A_{i\beta}$ we have
\begin{equation}\label{eq:clm2}
\bigl|(\psi_{i\alpha}(x)-\psi_{i\alpha}(y))-(\hat\varphi_{i\alpha}(x)-\hat\varphi_{i\alpha}(y))\bigr|<C|xy|
\end{equation}
by the Lipschitz continuity.

As mentioned before, the inequalities \eqref{eq:vol1} and \eqref{eq:vol2} imply $N_{i\beta}/N_i\approx 1$ provided $\nu$ is small enough.
Taking the average of the inequalities \eqref{eq:clm1} and \eqref{eq:clm2}, we obtain
\[\bigl|(\varphi_i\circ g(x)-\varphi_i\circ g(y))-(\hat\varphi_i(x)-\hat\varphi_i(y))\bigr|<\varkappa(\delta)|xy|,\]
as required.
\end{proof}

\begin{proof}[Proof of Theorem \ref{thm:main}]
Now suppose $M$ is a Riemannian manifold.
Let us prove that $\varphi\circ g$ restricted to $g^{-1}(B(p,3r))$ is a locally trivial fibration over $\varphi(B(p,3r))$.
Since $M$ is a manifold, it suffices to show that $\varphi\circ g$ is a topological submersion, i.e.\ locally a bundle map (see \cite[Remark]{CK} or \cite[6.14]{Si}).
Fix $z\in \bar B(\hat p,2r)\setminus B(\hat p,r)$ (the other case is similar).
By choosing $\hat a_{i\alpha}$ outside of the cut locus of $z$, we may assume that $\hat\varphi_{i\alpha}$ and $\hat\varphi_i$ are smooth near $z$.
Then, if $M$ is sufficiently close to $X$, angle comparison shows
\[\langle\nabla_z\hat\varphi_i,\nabla_z\hat\varphi_j\rangle<-c(\varepsilon),\quad\langle\nabla_z\hat\varphi_i,\hat w'_z\rangle>c(\varepsilon)\]
for any $i\neq j$, where $\hat w'_z$ denotes the direction of a shortest path from $z$ to $\hat w$.
In particular this implies that $\nabla_z\hat\varphi_i$ are linearly independent (it is easy to see by the Gram-Schmidt process, for example).
Let $\xi_{k+1},\dots,\xi_n\in T_z$ be an orthogonal basis of the orthogonal complement of the subspace spanned by $\nabla_z\hat\varphi_i$.
Let $b_i\in M$ be points in the directions $\xi_i$ outside the cut locus of $z$ and set $f_i:=|b_i\cdot|$ for $k+1\le i\le n$.
We also put $f_i:=\hat\varphi_i$ for $1\le i\le k$ and $F:=(f_1,\dots,f_n)$.

The inverse function theorem implies that $F$ is a local diffeomorphism near $z$.
Moreover for any $\xi\in\Sigma_z$ there exists $1\le i\le n$ such that $|\langle\nabla_zf_i,\xi\rangle|>c(\varepsilon)$, where $c(\varepsilon)$ is much smaller than the previous one (otherwise the orthogonal projections of $\nabla_zf_i$ to the subspace orthogonal to $\xi$ are linearly independent, which contradicts the dimension).
Since $F$ is smooth, this yields
\[|F(x)-F(y)|>c(\varepsilon)|xy|\]
for any $x$, $y$ close to $z$.
Together with Claim \ref{clm:key}, this implies
\[|\tilde F(x)-\tilde F(y)|>c(\varepsilon)|xy|,\]
where $\tilde F:=(\varphi\circ g,f_{k+1},\dots,f_n)$ (we may assume $\delta\ll\varepsilon$ so that $\varkappa(\delta)\ll c(\varepsilon)$).
By invariance of domain,  $\tilde F$ is a bi-Lipschitz homeomorphism near $z$ to an open subset of $\mathbb R^n$.
Therefore $\varphi\circ g$ is a topological submersion near $z$, as desired.
A similar argument using Theorem \ref{thm:str} also shows that $g$ is a locally trivial fibration outside $g^{-1}(B(p,3r))$.
This completes the proof.
\end{proof}

To prove Theorem \ref{thm:main'}, we need the generalized fibration theorem proved in \cite{F:fibr}.
Roughly speaking, this theorem says that a map $F$ satisfying the inequality
\[\bigl|(F(x)-F(y))-(G(x)-G(y))\bigr|<\delta|xy|\]
for some noncritical distance map $G$ is a locally trivial fibration.
We call such $F$ a generalized noncritical map.
However, we cannot apply this theorem directly to the map $\varphi\circ g$ in Claim \ref{clm:key} because the map $G$ used in the definition of a generalized noncritical map does not include admissible maps such as $\hat\varphi$.
We will explain how to modify it in the next section.

\section{Generalized fibration theorem revisited}\label{sec:rev}

In this section we finish the proof of Theorem \ref{thm:main'}.
As noted at the end of the previous section, we need to modify the generalized fibration theorem \cite[5.2]{F:fibr} to be suitable for our application.
We will modify the definition of a generalized noncritical map \cite[5.1]{F:fibr} to include admissible maps and show that the generalized fibration theorem still holds for the modified definition.
This section is rather technical and the reader is assumed to be familiar with the proof of the fibration theorem in \cite{Per:alex} (cf.\ \cite{Per:mor}) and its generalization in \cite{F:fibr}.
Here $\delta$ and $\varkappa(\delta)$ may depend additionally on $\varepsilon$ and are assumed to be much less than $\varepsilon$ and $c(\varepsilon)$.

Before describing the modification, we recall the notion of a DER function introduced in \cite{Per:mor}, which is an abstraction of the derivative of an admissible function.
Let $\Sigma$ be a space of curvature $\ge1$.
We call $f:\Sigma\to\mathbb R$ a \textit{DER function} if it has the following form:
\[f=\sum_\alpha-a_\alpha\cos|A_\alpha\cdot|,\]
where $\{A_\alpha\}_\alpha$ is a finite collection of compact subsets of $\Sigma$ and $a_\alpha\ge0$, $\sum_\alpha a_\alpha\le1$.
Note that the differential of $f$ at $p\in\Sigma$
\[f'_p=\sum_\alpha-a_\alpha\sin|A_\alpha p|\cos|(A_\alpha)'_p\cdot|\]
is again a DER function on $\Sigma_p$, where $(A_\alpha)'_p$ denotes the set of directions of shortest paths from $p$ to $A_\alpha$.

For two DER functions $f=\sum_\alpha-a_\alpha\cos|A_\alpha\cdot|$ and $g=\sum_\beta-b_\beta\cos|B_\beta\cdot|$ on $\Sigma$, we define their scalar product by
\[\langle f,g\rangle:=\sum_{\alpha,\beta}a_\alpha b_\beta\cos|A_\alpha B_\beta|.\]
The following properties of the scalar product were proved in \cite[\S2]{Per:mor} (cf.\ \cite[\S2]{Per:dc}).
Both follow from triangle comparison.
\begin{itemize}
\item For any DER functions $f,g$ on $\Sigma$ and $p\in\Sigma$, we have
\begin{equation}\label{eq:der1}
\langle f',g'\rangle_p\le\langle f,g\rangle-f(p)g(p).
\end{equation}
\item For any DER function $f$ on $\Sigma$, there exists $\hat f=-\hat a\cos|\hat A\cdot|$ such that
\begin{equation}\label{eq:der2}
\langle\hat f,g\rangle\le\langle f,g\rangle
\end{equation}
for any DER function $g$ on $\Sigma$, where $\hat A\in\Sigma$ and $0\le\hat a\le 1$.
\end{itemize}

The above properties allow us to prove the following DER versions of \cite[2.2--2.4]{Per:alex} (or in other words the $(\varepsilon,\delta)$-version of \cite[2.3]{Per:mor}, \cite[2.2]{Per:dc}).
Suppose $\dim\Sigma=n-1$.

\begin{lem}\label{lem:der1}
There exist no DER functions $f_1,\dots,f_{n+2}$ on $\Sigma$ such that $\langle f_1,f_i\rangle<-\varepsilon$ for any $i\ge 3$, $\langle f_i,f_j\rangle<\delta$ for any $i\neq j$, and $\max f_2>\varepsilon$, where $\delta\ll\varepsilon$.
\end{lem}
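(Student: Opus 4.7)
I would prove this by induction on $n$, following the broad structure of Perelman's point-set lemmas \cite[2.2--2.4]{Per:alex}, \cite[2.3]{Per:mor}, \cite[2.2]{Per:dc}, with \eqref{eq:der1} and \eqref{eq:der2} playing the role played by triangle comparison in the original arguments. The base case $n=1$ reduces to a finite check since $\Sigma$ is then $0$-dimensional of curvature $\ge 1$, hence at most two points.

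For the induction step, I would first apply \eqref{eq:der2} to each $f_i$ with $i\neq 2$, replacing it by a single-term DER function $\hat f_i=-\hat a_i\cos|\hat A_i\cdot|$ with $\hat A_i\in\Sigma$ and $\hat a_i\in[0,1]$. Since \eqref{eq:der2} only decreases scalar products, the hypotheses $\langle f_1,f_i\rangle<-\varepsilon$ and $\langle f_i,f_j\rangle<\delta$ (for $i,j\neq 2$) are preserved, and the hypothesis $\max f_2>\varepsilon$ is untouched. From $\hat a_1\hat a_i\cos|\hat A_1\hat A_i|<-\varepsilon$ I would read off $\hat a_i>\varepsilon$ for $i\neq 2$ and $|\hat A_1\hat A_i|>\pi/2+c(\varepsilon)$ for $i\ge 3$, while $\hat a_i\hat a_j\cos|\hat A_i\hat A_j|<\delta$ yields $|\hat A_i\hat A_j|>\pi/2-\varkappa(\delta)$ for all other distinct pairs in $\{1,3,\ldots,n+2\}$; in other words, the $n+1$ points $\hat A_1,\hat A_3,\ldots,\hat A_{n+2}$ form a weak cross-polytope in $\Sigma$.

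Next, pick $p\in\Sigma$ with $f_2(p)>\varepsilon$ and, for $i\neq 2$, define the rescaled differentials $g_i:=(\hat f_i)'_p/\hat a_i=-\sin|\hat A_ip|\cos|(\hat A_i)'_p\cdot|$ as DER functions on $\Sigma_p$, which has dimension $n-2$. Applying \eqref{eq:der1} termwise gives
\[\langle g_i,g_j\rangle_p\le\cos|\hat A_i\hat A_j|-\cos|\hat A_ip|\cos|\hat A_jp|.\]
The plan is then to apply the inductive hypothesis to the $n+1=(n-1)+2$ functions $g_1,g_3,\ldots,g_{n+2}$ on $\Sigma_p$, after relabeling one $g_j$ with $j\ge 3$ as the maximum-carrying function.

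The crux of the argument, and the main obstacle, is to show that the chosen $p$ lies in almost-equatorial position with respect to every $\hat A_i$, i.e.\ that $|\hat A_ip|$ is within $\varkappa(\delta)$ of $\pi/2$. Granting this, the cross terms $\cos|\hat A_ip|\cos|\hat A_jp|$ in the displayed inequality become negligible, so that $\langle g_1,g_j\rangle_p<-c(\varepsilon)$ and $\langle g_i,g_j\rangle_p<\varkappa(\delta)$ for $i,j\ge 3$; simultaneously $\max g_j\ge\sin|\hat A_jp|>c(\varepsilon)$, which is exactly what is needed to invoke the inductive assumption on $\Sigma_p$ and derive a contradiction. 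The equatorial claim itself should follow from the near-orthogonality $\langle f_2,\hat f_i\rangle<\delta$ combined with $f_2(p)>\varepsilon$ and the sign information $f_2'_p\le 0$ at the maximum point, fed through \eqref{eq:der1} applied to the pair $(\hat f_i,f_2)$; carrying this out rigorously is where the careful use of both DER properties is required.
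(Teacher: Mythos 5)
Your overall strategy (induction on $n$, replacing $f_2$ by a single-term function via \eqref{eq:der2}, differentiating at a well-chosen point and feeding the result back into the inductive hypothesis on $\Sigma_p$) is the right one and matches the paper's. But the step you yourself flag as the crux --- that a maximum point $p$ of $f_2$ satisfies $\bigl||\hat A_ip|-\pi/2\bigr|<\varkappa(\delta)$ for all $i\neq 2$ --- is a genuine gap, and the ingredients you list do not yield it. Applying \eqref{eq:der1} to $(\hat f_i,f_2)$ at $p$ gives $\langle (\hat f_i)',(f_2)'\rangle_p\le\langle\hat f_i,f_2\rangle-\hat f_i(p)f_2(p)<\delta-\hat f_i(p)f_2(p)$; if $|\hat A_ip|>\pi/2+\tau$ then $\hat f_i(p)>\varepsilon\sin\tau$, the right side is $<-c$, and \eqref{eq:der2} then forces $\max(f_2)'_p>c$, contradicting maximality --- so your argument rules out only the \emph{obtuse} side. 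If $|\hat A_ip|<\pi/2-\tau$ the term $-\hat f_i(p)f_2(p)$ is \emph{positive} and the inequality says nothing; likewise $\langle f_2,\hat f_i\rangle<\delta$ only gives $f_2(\hat A_i)>-\varkappa(\delta)$, which is again a one-sided statement. So the two-sided equatorial claim is not established. (Fortunately it is also not needed: for the product terms $\cos|\hat A_ip|\cos|\hat A_jp|$ to be $\ge-\varkappa$ it suffices that \emph{all} the cosines are $\ge-\varkappa$, i.e.\ the one-sided bound; but you should then say that, and your asserted lower bound $\max g_j\ge\sin|\hat A_jp|$ is also backwards --- the correct route to $\max g_j>c(\varepsilon)$ is via $\langle g_1,g_j\rangle_p<-c(\varepsilon)$ and \eqref{eq:der2}.)

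The paper sidesteps all of this by differentiating not at a maximum point of $f_2$ but at the point $\hat A_2$ produced by applying \eqref{eq:der2} to $f_2$. The hypothesis $\max f_2>\varepsilon$ forces $\hat a_2>\varepsilon$, and then for every $i\neq2$ the one-line computation $-\hat a_2f_i(\hat A_2)=\langle\hat f_2,f_i\rangle\le\langle f_2,f_i\rangle<\delta$ gives $f_i(\hat A_2)>-\varkappa(\delta)$ directly --- exactly the one-sided control needed so that \eqref{eq:der1} yields $\langle f_1',f_i'\rangle_{\hat A_2}<-\varepsilon+\varkappa(\delta)$ and $\langle f_i',f_j'\rangle_{\hat A_2}<\varkappa(\delta)$. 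Note also that there is no need to replace the $f_i$ ($i\neq2$) by single-term functions $\hat f_i$: the inequality above is stated for the original $f_i$, and the induction proceeds with the differentials $f_1',f_3',\dots,f_{n+2}'$ on $\Sigma_{\hat A_2}$, with the new ``$\max>\varepsilon$'' hypothesis supplied by Remark \ref{rem:der}(2). I recommend you restructure your induction step around $\hat A_2$ in this way; as written, the argument is incomplete at its central point.
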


\begin{lem}\label{lem:der2}
\begin{enumerate}
\item Let $f_1,\dots, f_{k+1}$ ($k\le n$) be DER functions on $\Sigma$ such that
\[\langle f_1,f_i\rangle<-\varepsilon,\quad\langle f_i,f_j\rangle<\delta\]
for any $i\neq j\ge2$, where $\delta\ll\varepsilon$.
Then there exists $\xi\in\Sigma$ such that
\[f_1(\xi)>c(\varepsilon),\quad f_2(\xi)<-c(\varepsilon),\quad f_i(\xi)=0\]
for any $i\ge 3$.
\item (1) holds true if we replace the assumption $\langle f_1,f_2\rangle<-\varepsilon$ by $\langle f_1,f_2\rangle<\delta$ and $\max f_2>\varepsilon$, and the conclusion $f_1(\xi)>c(\varepsilon)$ by $f_1(\xi)>-\varkappa(\delta)$.
\item Under the assumptions of (1) there exists $\xi\in\Sigma$ such that $f_2(\xi)>c(\varepsilon)$ and $f_i(\xi)=0$ for any $i\ge 3$.
\end{enumerate}
\end{lem}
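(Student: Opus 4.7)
The three statements are the $(\varepsilon,\delta)$-DER versions of Perelman's obtuse-family lemmas \cite[2.2--2.4]{Per:alex} (cf.\ \cite[2.3]{Per:mor}, \cite[2.2]{Per:dc}), and I would prove them by adapting those arguments to the DER setting, proceeding by induction on $n=\dim\Sigma+1$, with Lemma \ref{lem:der1} playing the role of the dimensional bound on obtuse families. The engine of the induction is the fact that the differential $f'_p$ of any DER function is again a DER function on $\Sigma_p$ (a space of curvature $\ge 1$ of dimension $n-2$), together with the fundamental inequality \eqref{eq:der1}, which converts a scalar-product hypothesis on $\Sigma$ into a scalar-product hypothesis on $\Sigma_p$ up to a correction by $f_i(p)f_j(p)$ that can be made small at carefully chosen extremal points.

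For part (3), I would first use \eqref{eq:der2} to replace each $f_i$ with $i\ge 3$ by a single-term DER function $\hat f_i=-\hat a_i\cos|\hat A_i\cdot|$; the hypotheses $\langle f_1,f_i\rangle<-\varepsilon$ and $\langle f_i,f_j\rangle<\delta$ are preserved under this reduction by \eqref{eq:der2}. Then I would let $\xi$ be a maximizer of $f_2$ on the compact set $\{x\in\Sigma:\hat f_i(x)\le 0\text{ for all }i\ge 3\}$, which is nonempty because the antipodality $\langle f_1,f_i\rangle<-\varepsilon$ forces near-maximizers of $f_1$ into this set. Assuming $f_2(\xi)\le c(\varepsilon)$ for contradiction, a KKT-type analysis at $\xi$ would produce a DER relation among $(f_2)'_\xi$ and the active differentials $(\hat f_i)'_\xi$ on $\Sigma_\xi$; estimating their mutual scalar products via \eqref{eq:der1} with $f_i(\xi)$ controlled by the activeness of the constraint, I would exhibit a family on $\Sigma_\xi$ violating Lemma \ref{lem:der1}.

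For part (1), once (3) is known, I would extremize a combination such as $f_1-\lambda f_2$ on the constraint variety $\{\hat f_i=0,\ i\ge 3\}$ for a suitable $\lambda>0$. The obtuseness $\langle f_1,f_i\rangle<-\varepsilon$ for $i\ge 3$ places the maximizer in the opposite hemisphere of the $\hat A_i$'s, giving $f_1(\xi)>c(\varepsilon)$, and the pairing $\langle f_1,f_2\rangle<-\varepsilon$ likewise forces $f_2(\xi)<-c(\varepsilon)$. Part (2) runs the same argument but with the pairing $\langle f_1,f_2\rangle$ controlled only by $\delta$; the extra assumption $\max f_2>\varepsilon$ is precisely what keeps the configuration nondegenerate in the sense needed for Lemma \ref{lem:der1}, and the resulting lower bound on $f_1(\xi)$ weakens accordingly from $c(\varepsilon)$ to $-\varkappa(\delta)$.

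The main technical obstacle is the KKT-type analysis at the extremum, which must be carried out entirely in the DER category on $\Sigma_\xi$: DER functions are only semiconcave, so the first-order conditions have to be formulated through their DER differentials rather than classical gradients, and a suitable combination of these differentials must be assembled to meet the hypotheses of Lemma \ref{lem:der1}. A related point of care is verifying that, after the error terms from \eqref{eq:der1}, the scalar-product inequalities on $\Sigma_\xi$ still lie within the $(\varepsilon,\delta)$-window required to invoke Lemma \ref{lem:der1}; this is ultimately what forces the standing assumption $\delta\ll\varepsilon$ throughout the induction.
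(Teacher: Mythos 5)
Your overall skeleton (induction on dimension, passing to $\Sigma_\xi$ via \eqref{eq:der1}, Lemma \ref{lem:der1} as the dimensional obstruction) matches the paper, but three of your specific steps do not go through as described. First, the reduction of the constraint functions $f_i$ ($i\ge 3$) to single-term $\hat f_i$ via \eqref{eq:der2} loses the conclusion: since $\langle\hat f,g\rangle\le\langle f,g\rangle$ for all $g$ is equivalent to $\hat f\ge f$ pointwise, a point with $\hat f_i(\xi)=0$ only gives $f_i(\xi)\le 0$, whereas the lemma (and its use downstream) requires $f_i(\xi)=0$ exactly. The paper uses \eqref{eq:der2} only on $f_2$, and only to produce a good starting point $\hat A_2$; the constraint set $X=\{f_1\ge c(\varepsilon),\ f_i\ge 0\ (i\ge3)\}$ is built from the original functions, and one reaches the exact level set $\{f_i=0,\ i\ge3\}$ not by a KKT analysis at an extremum but by Perelman's elementary deformation lemmas \cite[2.1.2, 2.1.3]{Per:alex}, which flow a point of $X$ to the zero set while increasing $f_1$ at rate $c(\varepsilon)$. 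Your ``main technical obstacle'' (first-order conditions for semiconcave DER functions at a constrained extremum) is precisely the step the proof must supply, and the intended route avoids it entirely; you should replace the KKT step by these deformation lemmas.

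Second, and more seriously, your treatment of part (2) misses its actual mechanism. With only $\langle f_1,f_2\rangle<\delta$ and $\max f_2>\varepsilon$, no one-shot extremization forces $f_2(\xi)<-c(\varepsilon)$: the hypotheses are nearly symmetric in sign and a priori compatible with $f_2\ge -\delta$ on the whole constraint variety. The paper's proof of (2) is a \emph{reverse induction on the number of functions} $k$: one produces $\zeta$ with $f_i(\zeta)=0$ ($i\ge3$) and $f_1(\zeta)>-\varkappa(\delta)+c(\varepsilon)|\zeta\eta|$; if $f_2(\zeta)$ fails to be $<-c(\varepsilon)$, then $|\zeta\hat A_2|>\varepsilon/2$ forces $f_1(\zeta)>c(\varepsilon)$, and one adjoins the new DER function $f_{k+2}=-\cos|\zeta\cdot|$ to the family, which again satisfies the hypotheses of (2); the process terminates because at $k=n$ Lemma \ref{lem:der1} forbids such a family. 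Finally, note the paper's order of deduction is the reverse of yours: (1) is proved directly by induction on $n$ (with $\xi$ a minimizer of $f_2$ over the $\zeta$'s produced by the deformation), and (3) is then an easy consequence of (1) via \cite[2.1.2]{Per:alex}; deriving (1) from (3) by extremizing $f_1-\lambda f_2$ is not supported by any estimate you give.
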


\begin{rem}\label{rem:der}
\begin{enumerate}
\item Under the assumption of Lemma \ref{lem:der2}(1) there exists $\eta\in\Sigma$ such that $f_i(\eta)>\varepsilon$ for any $i\ge 2$.
This immediately follows from \eqref{eq:der2}.
\item The assumption $\max f_2>\varepsilon$ in Lemma \ref{lem:der1} and Lemma \ref{lem:der2}(2) is necessary since the scalar product is positively homogeneous.
Note that this assumption is weaker than $\langle f_1,f_2\rangle<-\varepsilon$ as we have seen in (1).
\item Lemma \ref{lem:der2}(2) is only used in the proof of \cite[3.3]{Per:alex} (\cite[5.7]{F:fibr}), which is not actually necessary for the proof of the fibration theorem.
\end{enumerate}
\end{rem}

The proofs are just minor modifications of the original ones.
Here are outlines.

\begin{proof}[Proof of Lemma \ref{lem:der1}]
Let us prove it by induction on $n$.
The base case is obvious.
The induction step is as follows.
Let $\hat f_2=-\hat a_2\cos|\hat A_2\cdot|$ be as in \eqref{eq:der2}.
Note that the assumption $\max f_2>\varepsilon$ together with \eqref{eq:der2} implies $\hat a_2>\varepsilon$.
Hence for any $i\neq 2$ we have
\[-\hat a_2 f_i(\hat A_2)=\langle\hat f_2,f_i\rangle\le\langle f_2,f_i\rangle<\delta.\]
Since $\hat a_2>\varepsilon$ this gives $f_i(\hat A_2)>-\varkappa(\delta)$.
Therefore the inequality \eqref{eq:der1} implies
\begin{align*}
\langle f_1',f_i'\rangle_{\hat A_2}&\le\langle f_1,f_i\rangle-f_1(\hat A_2)f_i(\hat A_2)\\
&<-\varepsilon+\varkappa(\delta)<-c(\varepsilon).
\end{align*}
Similarly we have $\langle f_i',f_j'\rangle_{\hat A_2}<\varkappa(\delta)$ for any $i\neq j\ge 3$.
This completes the induction step (see also Remark \ref{rem:der}(2)).
\end{proof}

\begin{proof}[Proof of Lemma \ref{lem:der2}]
(1)
We prove it by induction on $n$.
The base case is obvious.
For the induction step, set
\[X:=\left\{x\in\Sigma\mid f_1(x)\ge c(\varepsilon),\ f_i(x)\ge0\ (i\ge3)\right\}.\]
First we show $X\neq\emptyset$.
Let $\hat f_2=-\hat a_2\cos|\hat A_2\cdot|$ be as in \eqref{eq:der2}.
An argument similar to the above proof shows
\[f_1(\hat A_2)>\varepsilon,\quad f_i(\hat A_2)>-\varkappa(\delta)\]
for any $i\ge 3$ and thus $\langle f_1',f_i'\rangle_{\hat A_2}<-c(\varepsilon)$.
In particular by Remark \ref{rem:der}(1) there exists a direction at $\hat A_2$ that increases the value of $f_i$ with velocity at least $c(\varepsilon)$ for any $i\ge 3$.
An easy argument based on this observation shows $X\neq\emptyset$.

For any $x\in X$ the inequality \eqref{eq:der1} implies
\[\langle f_1',f_i'\rangle_x<-\varepsilon,\quad\langle f_i',f_j'\rangle_x<\delta\]
for any $i\neq j\ge 3$.
Using the induction hypothesis and the elementary lemmas \cite[2.1.2, 2.1.3]{Per:alex}, one can find $\zeta\in\Sigma$ such that $f_1(\zeta)\ge c(\varepsilon)$ and $f_i(\zeta)=0$ for any $i\ge 3$.
Let $\xi$ be a minimum point of $f_2$ among such $\zeta$.
A similar argument shows $f_2(\xi)<-c(\varepsilon)$, which completes the proof.

(2)
We prove it by reverse induction on $k$.
Let $\hat f_2=-\hat a_2\cos|\hat A_2\cdot|$ be as in \eqref{eq:der2}.
An argument similar to (1) shows that there exists $\eta\in\Sigma$, $\varkappa(\delta)$-close to $\hat A_2$, such that $f_1(\eta)>-\varkappa(\delta)$ and $f_i(\eta)\ge0$ for any $i\ge3$.
Using (1) and \cite[2.1.2, 2.1.3]{Per:alex}, one can find $\zeta\in\Sigma$ such that
\[f_1(\zeta)>-\varkappa(\delta)+c(\varepsilon)|\zeta\eta|,\quad f_i(\zeta)=0\]
for any $i\ge 3$.

If $f_2(\zeta)<-c(\varepsilon)$ we are done.
Suppose the contrary: $f_2(\zeta)>-\delta$, where $\delta=c(\varepsilon)$ will be determined later.
Since $\hat f_2(\hat A_2)=-\hat a_2<-\varepsilon$, we have $f_2(\hat A_2)<-\varepsilon$ by \eqref{eq:der2}.
Hence we have $|\zeta\hat A_2|>\varepsilon/2$.
Since $\eta$ is $\varkappa(\delta)$-close to $\hat A_2$, this implies $|\zeta\eta|>\varepsilon/3$.
By the definition of $\zeta$, we have $f_1(\zeta)>c(\varepsilon)$.
Therefore the collection $f_1$, \dots, $f_{k+1}$, $f_{k+2}=-\cos|\zeta\cdot|$ satisfies the assumption of (2), which gives the induction step.
The base case $k=n$ follows from Lemma \ref{lem:der1}.

(3)
This easily follows from (1) and \cite[2.1.2]{Per:alex}.
See the original proof \cite[2.4]{Per:alex}.
\end{proof}

Now we explain how to modify the definition of a generalized noncritical map introduced in \cite{F:fibr}.
Here is the original definition \cite[5.1]{F:fibr}.

\begin{dfn}\label{dfn:nc}
Let $U$ be an open subset of an Alexandrov space $M$ and $p\in U$.
A map $f=(f_1,\dots,f_k):U\to\mathbb R^k$ is said to be \textit{$(\varepsilon,\delta)$-noncritical in the generalized sense} at $p$ if it satisfies the following conditions:
\begin{enumerate}
\item There exists a function $g_i:U\to\mathbb R$ such that
\[\bigl|(f_i(x)-f_i(y))-(g_i(x)-g_i(y))\bigr|<\delta|xy|\]
for any $x,y\in U$ and
\[g_i=\inf_\gamma g_{i\gamma},\quad g_{i\gamma}=\varphi_{i\gamma}(|A_{i\gamma}\cdot|)+\sum_{l=1}^{i-1}\varphi_{i\gamma}^l(f_l(\cdot))+c_{i\gamma},\]
where $c_{i\gamma}\in\mathbb R$, $A_{i\gamma}$ are compact subsets of $M$, $\varphi_{i\gamma}$ and $\varphi_{i\gamma}^l$ have right and left derivatives, $\varphi_{i\gamma}^l$ are $\varepsilon^{-1}$-Lipschitz functions, and $\varphi_{i\gamma}$ are increasing functions such that $\varphi_{i\gamma}(0)=0$ and $\varepsilon|x-y|\le|\varphi_{i\gamma}(x)-\varphi_{i\gamma}(y)|\le\varepsilon^{-1}|x-y|$.
\item The sets of indices $\Gamma_i(p):=\{\gamma\mid g_i(p)=g_{i\gamma}(p)\}$ satisfy $\#\Gamma_i(p)<\varepsilon^{-1}$.
Furthermore there exists $\rho=\rho(p)>0$ such that for all $i$
\[g_i(x)<g_{i\gamma}(x)-\rho\]
for any $x\in B(p,\rho)$ and $\gamma\notin\Gamma_i(p)$.
\item $\tilde\angle A_{i\alpha}pA_{j\beta}>\pi/2-\delta$ for any $i\neq j$, $\alpha\in\Gamma_i(p)$, $\beta\in\Gamma_j(p)$.
\item There exists $w=w(p)\in M$ such that $\tilde\angle A_{i\gamma}pw>\pi/2+\varepsilon$ for any $i$ and $\gamma\in\Gamma_i(p)$.
\end{enumerate}
\end{dfn}

Let us modify the above definition as follows:

\begin{itemize}
\item Divide the index set $\{1,\dots,k\}$ into two subsets $I$ and $J$.
For each $i\in I$ we assume $g_i$ is defined by the same formula as in (1).
For each $i\in J$ we change the definition of $g_i$ to
\[g_i=\frac1{N_i}\sum_{\gamma=1}^{N_i}|A_{i\gamma}\cdot|\]
where $A_{i\gamma}$ are compact subsets of $M$.
In this case (2) can be ignored.
\item Define DER functions
\[d_pg_i:=-\cos|A_i'(p)\cdot|,\quad A_i'(p):=\bigcup_{\gamma\in\Gamma_i(p)}(A_{i\gamma})'_p\]
for $i\in I$ and
\[d_pg_i:=-\frac1{N_i}\sum_{\gamma=1}^{N_i}\cos|(A_{i\gamma})'_p\cdot|\]
for $i\in J$.
\item Instead of (3) we assume that $\langle d_pg_i,d_pg_j\rangle<\delta$ for any $i\neq j$.
\item Instead of (4) we assume that there exists $\xi\in\Sigma_p$ such that $d_pg_i(\xi)>\varepsilon$ for any $i$.
\end{itemize}

Note that the change from comparison angle to real angle in (3) and (4) does not matter since the definition is local and the angle is lower semicontinuous.

With these modifications one can repeat the proof of the generalized fibration theorem in \cite[\S5]{F:fibr}.
In fact all we have to do is replace the conditions for angles in the original proof by the corresponding ones for DER functions, as we have done in Lemmas \ref{lem:der1} and \ref{lem:der2}.
The details are left to the reader.

Therefore the generalized fibration theorem \cite[5.2]{F:fibr} holds for the above modified definition and can be applied to the map $\varphi\circ g$ in Claim \ref{clm:key}.
This completes the proof of Theorem \ref{thm:main'}.

\end{document}